\title{Degree versions of the Erd\H os-Ko-Rado Theorem and Erd\H os hypergraph matching conjecture}
\newenvironment{proof}
      {\medskip\noindent{\bf Proof.}\hspace{1mm}}
      {\hfill$\Box$\medskip}
\def\qed{\ifvmode\mbox{ }\else\unskip\fi\hskip 1em plus 10fill$\Box$}
\newtheorem{theorem}{Theorem}[section]
\newtheorem{prop}[theorem]{Proposition}
\newtheorem{lemma}[theorem]{Lemma}
\newtheorem{coro}[theorem]{Corollary}
\newtheorem{conj}[theorem]{Conjecture}
\newtheorem{remark}[theorem]{Remark}
\newcommand{\HH}{{\cal H}}
\author{Hao Huang
	\thanks{Department of Math and CS, Emory University, Atlanta, GA 30322. Email: hao.huang@emory.edu.}
	\and Yi Zhao
	\thanks{Department of Mathematics, Georgia State University, Atlanta, GA 30302. Email: yzhao6@gsu.edu. Research supported in part by NSF grant DMS-1400073.}
}
\date{}
\begin{document}
\maketitle
\abstract{We use an algebraic method to prove a degree version of the celebrated Erd\H os-Ko-Rado theorem: given $n>2k$, every intersecting $k$-uniform hypergraph $H$ on $n$ vertices contains a vertex that lies on at most $\binom{n-2}{k-2}$ edges. 
This result can be viewed as a special case of the degree version of a well-known conjecture of Erd\H{o}s on hypergraph matchings. Improving the work of Bollob\'as, Daykin, and Erd\H os from 1976, we show that given integers $n, k, s$ with $n\ge 3k^2 s$, every $k$-uniform hypergraph $H$ on $n$ vertices with minimum vertex degree greater than $\binom{n-1}{k-1}-\binom{n-s}{k-1}$ contains $s$ disjoint edges.
}

\section{Introduction}\label{sec_introduction}

Fix integers $1\le k\le n$ and a set $X$ of $n$ elements.
The celebrated theorem of Erd\H os, Ko and Rado \cite{ekr} states that when $n \ge 2k$, every intersecting family of $k$-subsets on $X$ has at most $\binom{n-1}{k-1}$ members. Moreover, when $n>2k$, the extremal family is unique (up to isomorphism): it consists of all the $k$-subsets of $X$ that contains a fixed element. We call such family a \emph{$1$-star} (more precisely, $(n, k, 1)$-star). The Erd\H os-Ko-Rado theorem is widely regarded as the cornerstone of extremal combinatorics and has many interesting applications and generalizations, see \cite{FrGr89} for different proofs and \cite{DeFr83} for a survey.  One well-known generalization was given by Hilton and Milner \cite{hilton-milner}, who showed that if $n>2k$ and $\HH$ is an intersecting family of $k$-subsets of $X$, then either $|\HH| \le \binom{n-1}{k-1}-\binom{n-k-1}{k-1}+1$, or $\HH$ is a subfamily of a $1$-star. 

We may view a family $\HH$ of $k$-subsets of $X$ as a $k$-uniform hypergraph $H$ with vertex set $X$ and edge set $\HH$. Let $d<k$ be a nonnegative integer. Given a $k$-uniform hypergraph $H$ with a set $S$ of $d$ vertices, the degree of $S$, denoted by $\deg_H(S)$ or simply $\deg(S)$, is the number of edges containing $S$ as a subset. The minimum $d$-degree $\delta_d(H)$ is the minimum of $\deg(S)$ over all the $d$-subsets of $V(H)$. For example, $\delta_0(H)=e(H)$ is the number of edges in $H$, and $\delta_1(H)$ is the minimum vertex degree of $H$. 

In this paper we study $k$-uniform intersecting families from the aspect of the minimum vertex degree. There have been work on intersecting families with maximum degree conditions. For example, Frankl \cite{frankl87} extended the Hilton-Milner theorem by giving sharp upper bounds on the size of intersecting families with certain maximum degree. There were also much recent work on other extremal problems with minimum degree conditions, such as the $d$-degree Tur\'an problems (see, e.g., \cite{MuZh}) and hypergraph Dirac problems (see, e.g., \cite{RRsurvey}).

Our first result is a minimum degree version of the Erd\H os-Ko-Rado theorem. Note that it does not follow from the Erd\H os-Ko-Rado theorem directly because a $k$-uniform hypergraph with $\delta(H)\ge \binom{n-2}{k-2}$ may only have $\binom{n-2}{k-2}\frac{n}k < \binom{n-1}{k-1}$ edges.

\begin{theorem}\label{degree_ekr}
	Given $n \ge 2k+1$, if every vertex in a $k$-uniform hypergraph $H$ on $n$ vertices has degree at least $\binom{n-2}{k-2}$, then either $H$ is a $1$-star, or $H$ contains two disjoint edges.
\end{theorem}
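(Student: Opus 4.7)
The plan is to prove the contrapositive: assume $H$ is intersecting (no two disjoint edges), and show that $H$ must equal the full $1$-star. The argument splits into two cases depending on whether all edges of $H$ share a common vertex.

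\emph{Case 1: some vertex $v$ lies in every edge of $H$.} Then $H$ is contained in the $1$-star at $v$. Passing to the link $L_v := \{e \setminus \{v\} : e \in H\}$, a $(k-1)$-uniform hypergraph on $[n] \setminus \{v\}$, for every $u \ne v$ one has $\deg_{L_v}(u) = \deg_H(u) \ge \binom{n-2}{k-2}$. A handshake then gives $(k-1)|L_v| = \sum_{u \ne v} \deg_{L_v}(u) \ge (n-1)\binom{n-2}{k-2}$, hence $|L_v| \ge \binom{n-1}{k-1}$. The trivial upper bound $|L_v| \le \binom{n-1}{k-1}$ forces equality, so $L_v$ is complete and $H$ is precisely the $1$-star at $v$, as required.

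\emph{Case 2: no vertex lies in every edge of $H$, i.e.\ the covering number satisfies $\tau(H) \ge 2$.} Here I would aim to produce a vertex $w$ with $\deg_H(w) < \binom{n-2}{k-2}$, contradicting the hypothesis. Fix any $f \in H$ and any $w \in [n] \setminus f$. Since $H$ is intersecting, every edge of $H$ through $w$ must meet $f$, which already yields the Hilton-Milner-style bound $\deg_H(w) \le \binom{n-1}{k-1} - \binom{n-k-1}{k-1}$. This alone is not sharp enough, so I would further exploit $\tau(H) \ge 2$: for each $v \in f$ choose an edge $h_v \in H$ with $v \notin h_v$, so each $h_v$ necessarily meets $f \setminus \{v\}$. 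Any edge through $w$ must then meet $f$ and every $h_v$. Selecting $w$ outside the union $f \cup \bigcup_{v \in f} h_v$ (possible when $n$ is sufficiently large relative to $k$) and running inclusion-exclusion across these $k+1$ intersection constraints should push $\deg_H(w)$ strictly below $\binom{n-2}{k-2}$.

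The chief obstacle is making Case 2 sharp in the tight regime $n = 2k+1$, where the Hilton-Milner gap is smallest and the inclusion-exclusion terms nearly cancel. For $n$ comfortably larger than $2k$, Frankl-type stability results for intersecting families with $\tau \ge 2$ close the gap easily; near the boundary a more delicate argument is needed, and this is presumably where the algebraic method advertised in the abstract enters, packaging all of the pairwise-intersection constraints simultaneously (likely via an eigenvalue identity for the Kneser graph $K(n,k)$ weighted by the vertex-degree sequence) to deliver a single sharp inequality that rules out Case 2 in one stroke.
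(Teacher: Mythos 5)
Your Case 1 is correct and self-contained: if a vertex $v$ lies in every edge, the link $L_v$ inherits the degree hypothesis, and the double count $(k-1)|L_v| \ge (n-1)\binom{n-2}{k-2}$ together with the trivial upper bound $|L_v|\le\binom{n-1}{k-1}$ forces $L_v$ to be complete, so $H$ is the full $1$-star. That part stands.

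The genuine gap is Case 2, which is where all the content of the theorem lives, and you have not actually proved it. Your sketch (fix $f\in H$, pick for each $v\in f$ an edge $h_v\not\ni v$, choose $w$ outside $f\cup\bigcup_v h_v$, and run inclusion--exclusion over the constraints ``$e$ meets each of $f,h_1,\dots,h_k$'') has two concrete problems. First, choosing $w$ outside $f\cup\bigcup_v h_v$ requires $n > |f\cup\bigcup_v h_v|$, which can be as large as $k(k+1)$, so this step already fails well before the tight range $n\ge 2k+1$. Second, even when such a $w$ exists, the inclusion--exclusion over $k+1$ overlapping constraints does not obviously produce the clean bound $\binom{n-2}{k-2}$; you yourself flag that ``near the boundary a more delicate argument is needed'' and defer to an unspecified algebraic method. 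A proof cannot rest on that deferral. In fact, the quantity you are trying to prove, $\delta_1(H)<\binom{n-2}{k-2}$ for non-trivial intersecting $H$, is a weak form of the degree Hilton--Milner bound $\delta_1(H)\le\binom{n-2}{k-2}-\binom{n-k-2}{k-2}$, and the authors explicitly state in the concluding remarks that they can only prove even that for large $n$, in a separate forthcoming paper; they do not obtain it down to $n=2k+1$ by combinatorial counting.

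The paper's proof avoids the $\tau\ge 2$ case split entirely. It works with the characteristic vector $\vec h$ of $H$, expands it in the eigenbasis of the Kneser graph $KG(n,k)$, and derives a Hoffman-type lower bound on the ``level-$1$'' mass $\sum_{j=2}^n h_j^2$ from $\vec h^T A\vec h=0$. The crucial new ingredient is a geometric lemma (the simplex lemma): the $n$ projected star vectors $\vec u_1,\dots,\vec u_n$ in $\mathbb R^{n-1}$ have equal norms and equal pairwise inner products, so for any $\vec v$ some $\langle\vec v,\vec u_i\rangle$ is at most $-\frac{1}{n-1}\|\vec u_i\|\,\|\vec v\|$. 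Applied to $\vec v=(h_2,\dots,h_n)$ this gives an \emph{upper} bound on some vertex degree in terms of $\sum_{j=2}^n h_j^2$. Combining the two inequalities yields $(e(H)-\tfrac nk\binom{n-2}{k-2})(e(H)-\binom{n-1}{k-1})\ge 0$; the first root is ruled out by an equality analysis, so $e(H)\ge\binom{n-1}{k-1}$ and the EKR uniqueness statement finishes. This is the mechanism that gets you all the way down to $n\ge 2k+1$, and nothing in your combinatorial sketch supplies an equivalent. To repair your proof you would need to actually carry out this (or some other) argument for Case 2; as written, the proposal is incomplete.
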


Apparently the minimum degree condition in Theorem~\ref{degree_ekr} is sharp because of $1$-stars. Remark~\ref{remark} shows that the bound $n \ge 2k+1$ is also necessary. Furthermore, it is not difficult to prove Theorem \ref{degree_ekr} for sufficiently large $n$. In fact, let $H$ be an intersecting $k$-uniform hypergraph with $\delta_1(H)\ge \binom{n-2}{k-2}$. Recall that the Hilton-Milner theorem says that when $n>2k$, every  $H$ either satisfies $e(H) \le \binom{n-1}{k-1}-\binom{n-k-1}{k-1}+1$, or is a sub-hypergraph of some $1$-star. 
In the latter case, $H$ must be excactly a $1$-star. In the former case, the minimum degree condition implies that $e(H) \ge \frac{n}{k}\binom{n-2}{k-2}$. Simple computation shows that for $n$ sufficiently large in $k$ (say $n \gg k^2/\log k$), $\frac{n}{k}\binom{n-2}{k-2}>\binom{n-1}{k-1}-\binom{n-k-1}{k-1}+1$ so the former case cannot happen. However, in order to obtain the exact bound $n \ge 2k+1$, we will develop techniques similar to the Hoffman bound, and apply a lemma from combinatorial geometry. 

Our method also allows us to prove a generalization of Theorem \ref{degree_ekr}. We say that two families $\mathcal{B}$ and $\mathcal{C}$ of $k$-subsets of $[n]$ are \textit{cross-intersecting} if for every two sets $B \in \mathcal{B}$ and $C \in \mathcal{C}$, their intersection $B \cap C $ is non-empty. Pyber \cite{pyber_cross} showed that when $n \ge 2k$, every two cross-intersecting families $\mathcal{B}$ and $\mathcal{C}$ satisfy $|\mathcal{B}||\mathcal{C}| \le \binom{n-1}{k-1}^2$. The special case when $\mathcal{B}=\mathcal{C}$ is exactly the Erd\H os-Ko-Rado theorem. We are able to prove a degree version of Pyber's result, which is a generalization of Theorem~\ref{degree_ekr}. 

\begin{theorem}\label{thm_cross}
	For $n \ge 2k+1$, two cross-intersecting families $\mathcal{B}$ and $\mathcal{C}$ of $k$-subsets of $[n]$ satisfy
	$\delta_{1}(\mathcal{B}) \delta_{1}(\mathcal{C}) \le \binom{n-2}{k-2}^2$.
\end{theorem}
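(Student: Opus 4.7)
My plan is to extend the algebraic method used to prove Theorem~\ref{degree_ekr} to the bilinear (cross-intersecting) setting. Work in $\mathbb{R}^{\binom{[n]}{k}}$ with its standard inner product, and let $\chi_{\mathcal{B}}, \chi_{\mathcal{C}}$ denote the characteristic vectors of $\mathcal{B}$ and $\mathcal{C}$. For each $v\in[n]$, let $f_v$ be the characteristic vector of the star $\{S\in\binom{[n]}{k}:v\in S\}$, and let $A$ be the Kneser adjacency matrix on $\binom{[n]}{k}$, i.e.\ $A_{S,T}=1$ iff $S\cap T=\emptyset$. The cross-intersecting hypothesis becomes the orthogonality $\chi_{\mathcal{B}}^T A\,\chi_{\mathcal{C}}=0$, while the degree hypotheses become $\langle\chi_{\mathcal{B}},f_v\rangle\ge\delta_1(\mathcal{B})$ and $\langle\chi_{\mathcal{C}},f_v\rangle\ge\delta_1(\mathcal{C})$ for every $v$.

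The next step is to introduce the matrix $M:=\sum_v f_v f_v^T$, whose $(S,T)$-entry equals $|S\cap T|$, and observe the identity
\[
\chi_{\mathcal{B}}^T M\,\chi_{\mathcal{C}} \;=\; \sum_{v} \deg_{\mathcal{B}}(v)\deg_{\mathcal{C}}(v) \;\ge\; n\,\delta_1(\mathcal{B})\,\delta_1(\mathcal{C}).
\]
Thus the task reduces to proving the matching upper bound $\chi_{\mathcal{B}}^T M\,\chi_{\mathcal{C}}\le n\binom{n-2}{k-2}^2$. For this I would decompose both $\chi_{\mathcal{B}}$ and $\chi_{\mathcal{C}}$ along the eigenspaces $V_0\oplus V_1\oplus\cdots\oplus V_k$ of the Johnson association scheme. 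Since $A$, $M$ and the all-ones matrix $J$ all belong to its Bose--Mesner algebra, they act as scalars on each $V_i$, and the star vectors $f_v$ lie entirely in $V_0\oplus V_1$. The strategy is then to form a combination $N=M+\lambda A+\mu J$ whose eigenvalue on each $V_i$ with $i\ge 2$ is non-positive while the $V_0$- and $V_1$-components are controlled respectively by Pyber's theorem (bounding $|\mathcal{B}||\mathcal{C}|$) and by the averaged degree constraints. Since $\chi_{\mathcal{B}}^T A\,\chi_{\mathcal{C}}=0$, one then gets $\chi_{\mathcal{B}}^T M\,\chi_{\mathcal{C}}\le \chi_{\mathcal{B}}^T N\,\chi_{\mathcal{C}}$, and the latter is bounded by $n\binom{n-2}{k-2}^2$ after Cauchy--Schwarz on the low eigenspaces.

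The main obstacle is the precise choice of $(\lambda,\mu)$: one needs the eigenvalues of $N$ on $V_i$ to be non-positive for all $i\ge 2$, and simultaneously the combined bound must be saturated by $\mathcal{B}=\mathcal{C}=$ a 1-star, since that example meets $\delta_1(\mathcal{B})\delta_1(\mathcal{C})=\binom{n-2}{k-2}^2$. Verifying this requires working out the Eberlein polynomials giving the eigenvalues of $A$ and $M$ on each $V_i$ and checking a two-parameter system of sign/equality conditions. The sharp threshold $n\ge 2k+1$ is expected to enter through the sign of the bottom Kneser eigenvalue, which changes exactly at $n=2k$, so the same threshold as in Theorem~\ref{degree_ekr} should propagate. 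If the direct spectral construction runs into trouble at some $V_i$, a fallback is to mimic the Theorem~\ref{degree_ekr} argument more literally: project $\chi_{\mathcal{B}}$ and $\chi_{\mathcal{C}}$ onto the span of $\{f_v\}$ and its orthogonal complement, apply the Hoffman bound on the complement, and invoke the combinatorial-geometry lemma used there to handle the projected part.
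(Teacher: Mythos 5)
Your primary approach has a genuine gap at the very first reduction. You propose to prove $\chi_{\mathcal{B}}^T M\,\chi_{\mathcal{C}}=\sum_v \deg_{\mathcal{B}}(v)\deg_{\mathcal{C}}(v)\le n\binom{n-2}{k-2}^2$, but this inequality is already false for the extremal example $\mathcal{B}=\mathcal{C}=$ a $1$-star centered at some vertex $v_0$. In that case $\deg(v_0)=\binom{n-1}{k-1}$ while $\deg(v)=\binom{n-2}{k-2}$ for $v\ne v_0$, so
\[
\sum_v \deg(v)^2=\binom{n-1}{k-1}^2+(n-1)\binom{n-2}{k-2}^2 > n\binom{n-2}{k-2}^2.
\]
The problem is that passing from $\delta_1(\mathcal{B})\delta_1(\mathcal{C})$ to the \emph{averaged} quantity $\frac1n\sum_v \deg_{\mathcal{B}}(v)\deg_{\mathcal{C}}(v)$ loses exactly the slack that the theorem is about: the extremal configuration has one large degree and $n-1$ minimum ones, so the average strictly exceeds $\binom{n-2}{k-2}^2$ even when the minimum equals it. The Bose--Mesner machinery you describe cannot rescue this, since the desired bound on $\chi_{\mathcal{B}}^T M\,\chi_{\mathcal{C}}$ is simply not true. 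No choice of $(\lambda,\mu)$ in $N=M+\lambda A+\mu J$ can yield a valid inequality whose conclusion is false.

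What the theorem needs is not an average bound but the existence of a \emph{particular} vertex (one index $i$ for $\mathcal{B}$ and one index $j$ for $\mathcal{C}$) where the degree products are small, and that is precisely what the combinatorial-geometry lemma (the simplex Lemma~\ref{lem_simplex}) delivers: it locates a star direction $\vec{u}_i$ making a large obtuse angle with the projection of $\chi_{\mathcal{B}}$ onto $E_1$. Your ``fallback'' plan --- project $\chi_{\mathcal{B}}$ and $\chi_{\mathcal{C}}$ onto $E_0\oplus E_1$ and its complement, use a Hoffman-type estimate together with the cross-intersecting relation $\chi_{\mathcal{B}}^T A\,\chi_{\mathcal{C}}=0$ to bound the weight in $E_1$, and then apply the simplex lemma separately to the two projections to locate low-degree vertices --- is the correct route and is in fact what the paper does, with two applications of Cauchy--Schwarz (inequality~\eqref{eq:cs}) to symmetrize the $\mathcal{B}$/$\mathcal{C}$ quantities and a final appeal to Pyber's theorem to close the contradiction. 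You should promote the fallback to the main argument and discard the $M$-matrix averaging step.
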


A \emph{matching} in a hypergraph $H$ is a collection of vertex-disjoint edges; the \emph{size} of a matching is the number of edges in the matching. The matching number $\nu(H)$ is the maximum size of a matching in $H$. 
The Erd\H os-Ko-Rado theorem, when stated in the language of matchings, says that for $n \ge 2k$, if a $k$-uniform hypergraph $H$ on $n$ vertices has more than $\binom{n-1}{k-1}$ edges, then $\nu(H) \ge 2$. In 1965 Erd\H os \cite{erdos-matching} gave a conjecture that relates the size $e(H)$ of a $k$-uniform hypergraph $H$ to its matching number $\nu(H)$. 

\begin{conj} \cite{erdos-matching}
\label{conj:Erdos}
Every $k$-uniform hypergraph $H$ on $n$ vertices with matching number $\nu(H)<s \le n/k$ satisfies 
$$e(H) \le \max \left\{ \binom{ks-1}{k}, \binom{n}{k}-\binom{n-s+1}{k} \right\}.$$
\end{conj}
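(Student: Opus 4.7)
The plan is to apply shifting followed by a dichotomy on the covering number $\tau(H)$. Two distinct constructions, the complete $k$-graph on a $(ks-1)$-vertex subset and the family of all $k$-sets meeting a fixed $(s-1)$-set, achieve the conjectured bound in different regimes, so any argument must simultaneously control both and their crossover near $n \approx ks$.

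First I would reduce to the left-compressed case by iterating the standard Frankl shift $S_{ij}$ for each pair $i<j$: replace each edge containing $j$ but not $i$ with its $i$-shift whenever possible. This preserves $e(H)$ and does not increase $\nu(H)$, so one may assume $H$ is left-compressed. A shifted counterexample enjoys rigid structural properties (for instance, any minimum cover lies in an initial segment of $[n]$), which are the main leverage in what follows.

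Next I would split on $\tau(H)$. If $\tau(H) \le s-1$, then every edge meets a fixed $(s-1)$-set and $e(H) \le \binom{n}{k} - \binom{n-s+1}{k}$ is immediate. In the opposite extreme, one would try to show that when $\tau(H)$ is sufficiently large the edges essentially live on a $(ks-1)$-set, giving $e(H) \le \binom{ks-1}{k}$. The natural tools here are induction on $s$ via the link of a high-degree vertex (where the matching threshold drops to $s-1$) together with Frankl's delta-system/kernel method to pin down a concentrated core.

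The hardest part is the intermediate regime where $\tau(H)$ is neither bounded by $s-1$ nor large enough to force a concentrated ground set. Here neither extremal family is obviously approached and delta-system estimates typically lose constants, so a proof with the \emph{sharp} bound of the conjecture, rather than only an asymptotic one, requires a new structural input. The most plausible route is a stability-type theorem for left-compressed $k$-graphs of prescribed matching number that describes precisely how shifted $H$ with $\tau(H)$ in this window must look; producing such a theorem at the critical scale $n \approx ks$ is exactly where this approach, like every previous one, would have to contribute a genuinely new idea.
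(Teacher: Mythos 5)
The statement you were asked to prove is Conjecture~\ref{conj:Erdos}, which this paper does \emph{not} prove. It is stated precisely as an open conjecture, with citations to the known partial results: Erd\H{o}s and Gallai for $k=2$, Frankl for $k=3$, and for general $k$ a succession of bounds on $n$ (Erd\H{o}s for $n\ge n_0(k,s)$; Bollob\'as--Daykin--Erd\H{o}s for $n>2k^3(s-1)$; Huang--Loh--Sudakov for $n\ge 3k^2 s$; Frankl--{\L}uczak--Mieczkowska for $n\ge 2k^2 s/\log k$; Frankl for $n\ge (2s-1)k-s+1$). The conjecture in its full range $s\le n/k$ remains open, and the paper only uses it as motivation and (in its proved range) as an input to the fractional corollary via Proposition~\ref{lemma_reduction_edge}. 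So there is no ``paper's own proof'' to compare against.

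Your sketch is a reasonable summary of the standard attack --- shift to a left-compressed family, split on the covering number $\tau(H)$, handle $\tau(H)\le s-1$ trivially, and try to force concentration on a $(ks-1)$-set when $\tau(H)$ is large via links and the delta-system method --- and you are candid that the intermediate regime near $n\approx ks$ is exactly where this breaks down. That candor is the right diagnosis, but it also means you have not produced a proof: the third paragraph is an honest statement that the crucial step is missing, not an argument. Two small technical cautions worth recording if you pursue this: (i) the shift $S_{ij}$ can \emph{decrease} $\nu(H)$ but never increases it, which is what you want, but it can also change $\tau(H)$ in either direction, so the dichotomy on $\tau$ must be set up after shifting and argued carefully; (ii) the claim that a shifted family with $\tau(H)\le s-1$ has a cover inside an initial segment is true and standard, but the complementary claim that large $\tau$ forces the ground set to shrink to size $ks-1$ is precisely what is false in general without further hypotheses --- that is the content of the crossover between the two extremal families $H^1_{n,k,s}$ and $H^k_{n,k,s}$, and no known stability theorem resolves it at the critical scale.
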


For $1\le i\le k$, define $H^i_{n, k, s}$ to be the $k$-uniform hypergraph on $n$ vertices whose vertex set contains a set $S$ of $si-1$ vertices, and whose edge set consists of all $k$-sets $e$ such that $|e\cap S|\ge i$. It is clear that $H^i_{n, k, s}$ contains no matching of size s. Conjecture~\ref{conj:Erdos} says that the largest number of edges in a $k$-uniform hypergraph on $n$ vertices not containing a matching of size $s$ is attained by $H^1_{n, k, s}$ or $H^k_{n, k, s}$.

The $k=2$ case of Conjecture~\ref{conj:Erdos} is a classic result of Erd\H os and Gallai \cite{erdos-gallai}. When $k=3$, Frankl, R\"odl and Ruci\'nski \cite{frankl-rodl-rucinski} proved the conjecture for $s \le n/4$, and {\L}uczak and Mieczkowska \cite{luczak-mieczkowska} proved it for sufficiently large $s$. Recently Frankl \cite{frankl_3-graph} proved the conjecture for $k=3$. For arbitrary $k$, Erd\H{o}s \cite{erdos-matching} proved the conjecture for $n\ge n_0(k, s)$; 
Bollob\'as, Daykin and Erd\H{o}s \cite{bollobas-daykin-erdos} proved the conjecture for $n> 2k^3 (s-1)$. Huang, Loh and Sudakov \cite{huang-loh-sudakov} improved it to $n\ge 3k^2 s$; Frankl, \L uczak, and Mieczkowska \cite{FLM} further improved to $n\ge 2k^2 s/ \log k$. Recently Frankl \cite{frankl-general} proved the conjecture for $n\ge (2s-1)k- s+1$.




Bollob\'as, Daykin, and Erd\H os \cite{bollobas-daykin-erdos} also considered the minimum degree version of Conjecture~\ref{conj:Erdos}. They showed that if $n> 2k^3(s-1)$ and $H$ is a $k$-uniform hypergraph $H$ on $n$ vertices with $\delta(H)> \binom{n-1}{k-1}-\binom{n-s}{k-1}$, then $\nu(H)\ge s$. The construction $H^1_{n,k,s}$ shows that their minimum degree condition is best possible. Our next result improves this result by reducing the bound on $n$ to $n\ge 3k^2 s$.

\begin{theorem}\label{thm:degErdos}
Given $n, k, s$ with $n\ge 3k^2 s$, if the degrees of every vertex in a $k$-uniform hypergraph $H$ on $n$ vertices is strictly greater than $\binom{n-1}{k-1}-\binom{n-s}{k-1}$, then $H$ contains a matching of $s$ disjoint edges.
\end{theorem}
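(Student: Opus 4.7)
The plan is to argue by contradiction and by induction on $s$, combining the Huang-Loh-Sudakov (HLS) edge version of the matching conjecture with a structural analysis around a maximum matching. The base case $s=2$ is a direct consequence of Theorem~\ref{degree_ekr}: if $\nu(H)\le 1$, then the strict hypothesis $\delta_1(H)>\binom{n-1}{k-1}-\binom{n-2}{k-1}=\binom{n-2}{k-2}$ rules out the only remaining possibility, namely that $H$ is a $1$-star (which has minimum degree exactly $\binom{n-2}{k-2}$). Note that $n\ge 3k^2 s\ge 2k+1$, so the hypothesis of Theorem~\ref{degree_ekr} is satisfied.

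For the inductive step, fix $s\ge 3$ and suppose for contradiction that $\nu(H)\le s-1$. Since the degree threshold is strictly monotone in $s$ (we have $\binom{n-s}{k-1}<\binom{n-s+1}{k-1}$), the inductive hypothesis at $s-1$ applies and forces $\nu(H)\ge s-1$. Fix a maximum matching $M=\{e_1,\dots,e_{s-1}\}$ and set $U=V(M)$ (with $|U|=k(s-1)$) and $W=V\setminus U$. Maximality forces every edge of $H$ to meet $U$. The heart of the proof is to promote this cover of size $k(s-1)$ to a much smaller one: I would aim to exhibit an $(s-1)$-subset $S\subseteq U$ meeting every edge of $H$. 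Granting this, any vertex $v\in V\setminus S$ would satisfy
\[
\deg_H(v)\;\le\;\binom{n-1}{k-1}-\binom{n-s}{k-1},
\]
directly contradicting the minimum-degree hypothesis.

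To produce the $(s-1)$-cover I would run an edge-rotation argument anchored on $M$. For any $w\in W$ and any edge $e\ni w$ with $e\cap U=\{u\}\subseteq e_j$, the matching $M'=(M\setminus\{e_j\})\cup\{e\}$ is again maximum, so $H$ contains no edge inside $V\setminus V(M')$. As $w$, $e$, and $j$ vary, the family of such restrictions severely constrains the trace of $H$ on $U$: essentially each edge of $H$ must meet $U$ in a pattern that is stable under all these rotations. I would then invoke HLS (valid because $n\ge 3k^2 s$ gives the edge bound $e(H)\le\binom{n}{k}-\binom{n-s+1}{k}$) together with the lower bound $e(H)\ge\frac{n}{k}\delta_1(H)$ supplied by the degree hypothesis to show that the trace structure on $U$ collapses onto a single common $(s-1)$-subset.

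The main obstacle is this last step. Because the direct edge-counting inequality $\frac{n}{k}\delta_1(H)\le\binom{n}{k}-\binom{n-s+1}{k}$ is not strong enough on its own (in fact it is consistent with both sides of the target conclusion), the rotation analysis must yield a strictly \emph{structural} gain, forcing an extremal configuration rather than merely counting edges. The hypothesis $n\ge 3k^2 s$ is used both to apply HLS and to ensure $|W|$ is substantially larger than $|U|$, which is crucial for the rotation argument to range over enough choices of $w\in W$ to pin down the $(s-1)$-subset.
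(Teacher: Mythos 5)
Your base case and induction framework match the paper, but the core of your inductive step is a genuine gap. You propose to promote the size-$(s-1)k$ cover $V(M)$ to an $(s-1)$-vertex cover via an "edge-rotation argument," and then contradict the degree bound at a vertex outside that cover. You yourself flag that the direct count $\frac{n}{k}\delta_1(H) \le \binom{n}{k}-\binom{n-s+1}{k}$ is consistent with both sides, so the rotation step must do real structural work — but you never actually carry it out. Proving that an extremal (or near-extremal) $k$-graph with $\nu<s$ has an $(s-1)$-vertex cover is essentially a stability statement about the Erd\H{o}s matching conjecture; nothing in your sketch shows how the rotations force all edges through a common $(s-1)$-set, and it is not clear they can, since non-extremal hypergraphs with $\nu<s$ need not admit such a cover at all.

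The paper avoids the cover question entirely with a simpler two-case degree dichotomy. If some vertex $v$ has $\deg(v)>k(s-1)\binom{n-2}{k-2}$, delete $v$: every remaining vertex loses at most $\binom{n-2}{k-2}$ from its degree, so $H-v$ still meets the threshold for $s-1$, and the high degree of $v$ lets you extend the resulting $(s-1)$-matching through $v$. Otherwise the maximum degree is bounded by $k(s-1)\binom{n-2}{k-2}$; either $s$ vertices have degree $>2(s-1)\binom{n-2}{k-2}$, in which case Lemma~\ref{rainbow_matching} (the HLS "rainbow" lemma, not the edge version of the conjecture) produces the matching directly, or else all but $s-1$ vertices have degree $\le 2(s-1)\binom{n-2}{k-2}$. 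In that last subcase one takes a size-$(s-1)$ matching $M$ from induction and simply compares $e(H)$ — which exceeds $\frac{n}{k}(s-1)\binom{n-s}{k-2}$ — against the sum of degrees of the $(s-1)k$ vertices in $V(M)$, which is at most $(3k-2)(s-1)^2\binom{n-2}{k-2}$; the hypothesis $n\ge 3k^2s$ makes $e(H)$ strictly larger, so some edge is disjoint from $M$. This counting step is exactly where your proposed argument leaves a hole: the paper gets the needed edge not by finding a small cover but by showing the degrees inside $V(M)$ cannot account for all of $E(H)$.
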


The proof of Theorem~\ref{thm:degErdos} follows the approach used by Huang, Loh and Sudakov \cite{huang-loh-sudakov} and applies Theorem~\ref{degree_ekr} as the base case of the induction.

The rest of the paper is organized as follows. We prove Theorems \ref{degree_ekr} and \ref{thm_cross} in the next section, prove Theorem \ref{thm:degErdos} and discuss its fractional version in Section~\ref{sec_general}.
The final section contains concluding remarks and open problems.

\section{Proofs of Theorems \ref{degree_ekr} and \ref{thm_cross}}
\label{sec_degree-ekr}
As we mentioned earlier in the introduction, it is easy to prove Theorem \ref{degree_ekr} for sufficiently large $n$. To obtain the tight bound $n\ge 2n+1$, our proof uses the spectral method and relies on the following lemma from combinatorial geometry. To understand this lemma intuitively, we would like to point out that the $n=3$ case of this lemma says that given three vectors $\vec{u}_1, \vec{u}_2, \vec{u}_3$ in $\mathbb{R}^2$ such that the angle between any two of the vectors is $120$ degree, then an arbitrary vector $\vec{v}$ must have an angle at least $120$ degree with some of $\vec{u}_i$.

\begin{lemma} \label{lem_simplex}
Suppose $\vec{u}_1, \cdots, \vec{u}_n$ are $n$ distinct non-zero vectors of equal length in $\mathbb{R}^{n-1}$ such that for all $i \ne j$, $\langle \vec{u}_i, \vec{u}_j \rangle$ are equal. Then for an arbitrary vector $\vec{v}$, there exists an index $i$ such that 
$$\langle \vec{v}, \vec{u}_i \rangle \le -\frac{1}{n-1} \langle \vec{u}_i, \vec{u}_i \rangle^{1/2} \langle \vec{v}, \vec{v} \rangle^{1/2}.$$ 
\end{lemma}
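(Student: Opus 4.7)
The plan is to exploit the rigid structure forced by the hypotheses — the $\vec u_i$ must be the vertices of a regular simplex centered at the origin — and then deduce the bound from a short scalar optimization.

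First I would let $r^2=\langle\vec u_i,\vec u_i\rangle$ and $c=\langle\vec u_i,\vec u_j\rangle$ for $i\ne j$, and examine the Gram matrix $G=(r^2-c)I+cJ$, whose eigenvalues are $r^2+(n-1)c$ (with eigenvector $\mathbf 1$) and $r^2-c$ with multiplicity $n-1$. Distinctness of the $\vec u_i$ rules out $r^2=c$, while the rank deficiency forced by $\vec u_i\in\mathbb{R}^{n-1}$ forces $r^2+(n-1)c=0$. This gives $c=-r^2/(n-1)$ and $\sum_i\vec u_i=\vec 0$ (since $\mathbf 1$ spans $\ker G$ and $\|\sum_i\vec u_i\|^2=\mathbf 1^TG\mathbf 1$).

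Next, setting $\alpha_i=\langle\vec v,\vec u_i\rangle$, the identity $\sum_i\vec u_i=\vec 0$ immediately gives $\sum_i\alpha_i=0$. To evaluate $\sum_i\alpha_i^2=\vec v^TA\vec v$ with $A:=\sum_i\vec u_i\vec u_i^T$, I would let $U$ be the $n\times(n-1)$ matrix with rows $\vec u_i^T$; then $A=U^TU$ and $G=UU^T$ share their nonzero eigenvalues with multiplicities. Consequently $A=\tfrac{nr^2}{n-1}I_{n-1}$ and $\sum_i\alpha_i^2=\tfrac{nr^2}{n-1}\|\vec v\|^2$.

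For the last step, set $t=-\min_i\alpha_i$. Maximizing the convex function $\sum_i\alpha_i^2$ over the bounded polytope $\{\sum_i\alpha_i=0,\;\alpha_i\ge-t\}$, the maximum is attained at a vertex, where $n-1$ of the inequalities $\alpha_i\ge-t$ are tight and the remaining coordinate equals $(n-1)t$. This gives the bound $\sum_i\alpha_i^2\le(n-1)t^2+(n-1)^2t^2=n(n-1)t^2$. Combining with the exact value of $\sum_i\alpha_i^2$ yields $t\ge r\|\vec v\|/(n-1)$, and the index achieving $\min_i\alpha_i$ supplies the desired $\vec u_i$.

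I would expect the main delicate point to be the Gram matrix analysis — pinning down that $n$ equal-length vectors with equal pairwise inner products in $\mathbb{R}^{n-1}$ must sum to zero and form a regular simplex. Once this rigidity is established, the remaining extremal inequality is a routine vertex-of-polytope computation.
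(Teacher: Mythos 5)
Your proof is correct, and it takes a genuinely different route from the paper's. After both arguments pin down the regular-simplex structure (you via the Gram matrix $G=(r^2-c)I+cJ$, the rank constraint, and the resulting identities $\sum_i\vec u_i=\vec 0$ and $c=-r^2/(n-1)$; the paper by citing the standard fact that $n$ equidistant points on a sphere in $\mathbb{R}^{n-1}$ form a regular simplex centered at the origin), they diverge. The paper decomposes $\mathbb{R}^{n-1}$ into the $n$ cones $C_i$ generated by all the $\vec u_j$ with $j\ne i$, writes $\vec v=\sum_j\alpha_j\vec u_j$ with $\alpha_j\ge 0$ and some $\alpha_i=0$, and then uses convexity of the unit ball to bound $\|\vec v\|\le\sum_j\alpha_j$. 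You instead observe that $A=U^TU$ shares its nonzero spectrum with $G=UU^T$, hence $A=\tfrac{nr^2}{n-1}I_{n-1}$, giving the exact identity $\sum_i\langle\vec v,\vec u_i\rangle^2=\tfrac{nr^2}{n-1}\|\vec v\|^2$ alongside $\sum_i\langle\vec v,\vec u_i\rangle=0$; you then finish by maximizing the convex function $\sum_i\alpha_i^2$ over the polytope $\{\sum\alpha_i=0,\ \alpha_i\ge -t\}$, whose vertices are permutations of $(-t,\dots,-t,(n-1)t)$. Your route replaces the slightly informal ``the union of the cones $C_i$ covers $\mathbb{R}^{n-1}$'' step with a clean spectral identity plus a standard polytope-vertex argument, which is arguably more self-contained; the paper's cone argument is shorter once one accepts the covering claim and yields the bound without computing $\sum_i\alpha_i^2$ exactly.
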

\begin{proof}
Without loss of generality, we may assume that all $\vec{u}_i$ 
are unit vectors. Note that for $i \ne j$,  $||\vec{u}_i-\vec{u}_j||^2=||\vec{u}_i||^2+||\vec{u}_j||^2-2\langle \vec{u}_i, \vec{u}_j \rangle=2-2\langle \vec{u}_i, \vec{u}_j \rangle$ is a constant. It is well known that in $\mathbb{R}^{n-1}$, if there are $n$ distinct points on the unit sphere whose pairwise distances are equal, then these points are the vertices of a regular simplex, and their center of mass is at the origin. Therefore we have
$$2\sum_{1 \le i < j \le n} \langle \vec{u}_i, \vec{u}_j \rangle
=  \left\langle \sum_{i=1}^n \vec{u}_i, \sum_{i=1}^n \vec{u}_i \right\rangle
-\sum_{i=1}^n \langle \vec{u}_i, \vec{u}_i \rangle = -n, $$
which implies that for $i \ne j$, $\langle \vec{u}_i, \vec{u}_j \rangle = -n/(n(n-1))= -1/(n-1)$. Denote by $C_i$ the convex cone formed by all the nonnegative linear combinations of the $n-1$ vectors $\vec{u}_1, \cdots, \vec{u}_{i-1}, \vec{u}_{i+1}, \cdots, \vec{u}_n$. It is not hard to see that the union of $C_i$ is equal to $\mathbb{R}^{n-1}$. Therefore an arbitrary vector $\vec{v}$ can be represented as $\sum_{j=1}^n \alpha_j \vec{u}_j$ for some $\alpha_1, \cdots, \alpha_n \ge 0$, such that $\alpha_i=0$ for some index $i$. As a result, 
$$\langle \vec{v}, \vec{u}_i \rangle= \sum_{j=1}^n \alpha_j \langle \vec{u}_j, \vec{u}_i \rangle = -\frac{1}{n-1} \sum_{j=1}^n \alpha_j.$$
On the other hand, since the unit ball in $\mathbb{R}^{n-1}$ is convex and contains $\vec{u}_i$, we have $\vec{v}/(\sum_{j=1}^n \alpha_j)=(\sum_{j=1}^n \alpha_j \vec{u}_j)/(\sum_{j=1}^n \alpha_j)$ is also in the unit ball. Therefore $||v|| \le \sum_{j=1}^n \alpha_j$ and consequently
$$\langle \vec{v}, \vec{u}_i \rangle = -\frac{1}{n-1} \sum_{j=1}^n \alpha_j  \le -\frac{1}{n-1}\langle \vec{v}, \vec{v} \rangle ^{1/2},$$ which concludes the proof.
\end{proof}

Lemma \ref{lem_simplex}, combined with some ideas similar to what was used to prove Hoffman's bound \cite{hoffman}, leads to the proof of Theorem \ref{degree_ekr}.\\

\noindent \textbf{Proof of Theorem \ref{degree_ekr}: }

We start by assuming that the hypergraph $H$ is intersecting and all the vertex degrees are at least $\binom{n-2}{k-2}$. To prove Theorem \ref{degree_ekr}, it suffices to show that under these assumptions $H$ must be a $1$-star. By double counting, the number of edges $e(H)$ is at least $\frac{n}{k} \binom{n-2}{k-2}$.

Let $G$ be the Kneser graph $KG(n, k)$ whose vertices correspond to all the $k$-subsets of a set of $n$ elements, and where two vertices are adjacent if and only if the two corresponding sets are disjoint. Let $A$ be its adjacency matrix. It is known (see for example on Page $200$ of \cite{godsil-royle}) that the eigenvalues of $A$ are $\lambda_j=(-1)^j \binom{n-k-j}{k-j}$ with multiplicity $\binom{n}{j}-\binom{n}{j-1}$ for $j=0, \cdots, k$ (assuming that $\binom{n}{-1}=0$). Moreover, the eigenspace $E_0$ of the eigenvalue $\lambda_0=\binom{n-k}{k}$ is spanned by the unit vector $\vec{v}_1=\vec{1}/(\sqrt{\binom{n}{k}})$. The eigenspace $E_1$ of the eigenvalue $-\binom{n-k-1}{k-1}$ is the orthogonal complement of $E_0$ in the subspace spanned by the characteristic vectors of all the $n$ distinct $1$-stars. We assume that unit vectors $\vec{v}_2, \vec{v}_3, \cdots, \vec{v}_n$ form an orthogonal basis of $E_1$. 
We denote by $\vec{s}_i$ the characteristic vector of the $1$-star centered at $i$. Since $\vec{s}_i$ is contained in $E_0 \bigoplus E_1$, we may further assume that
\begin{align*}
\vec{s}_1=s_{11}\vec{v}_1+ s_{12}\vec{v}_2 + \cdots + s_{1n}\vec{v}_n,\\
\vec{s}_2=s_{21}\vec{v}_1+ s_{22}\vec{v}_2 + \cdots + s_{2n}\vec{v}_n,\\
\vdots~~~~~~~~~~~~~~~~~~~~~~~~\\
\vec{s}_n=s_{n1}\vec{v}_1+ s_{n2}\vec{v}_2 + \cdots + s_{nn}\vec{v}_n.
\end{align*}
Clearly $\vec{v}_1, \cdots, \vec{v}_n$ can be extended to $\vec{v}_1, \cdots, \vec{v}_{\binom{n}{k}}$, which form an orthogonal basis for $\mathbb{R}^{\binom{n}{k}}$, in which $\vec{v}_{{n \choose j-1}+1}, \cdots, \vec{v}_{{n \choose j}}$ are the eigenvectors for $\lambda_j=(-1)^j \binom{n-k-j}{k-j}$.

Now suppose the characteristic vector $\vec{h}$ of the hypergraph $H$ can be represented as $\vec{h}=\sum_{i=1}^{\binom{n}{k}} h_i \vec{v}_i$. Since $H$ is intersecting, we have
\begin{align}
0 &=\vec{h}^T A \vec{h} = \sum_{i=0}^{k} \lambda_i \cdot  \left(\sum_{j=\binom{n}{i-1}+1}^{\binom{n}{i}} h_j^2 \right) \nonumber \\
&=\binom{n-k}{k}h_1^2 - \binom{n-k-1}{k-1} \left(h_2^2+\cdots +h_n^2 \right) +\sum_{i=2}^{k} \lambda_i \cdot  \left(\sum_{j=\binom{n}{i-1}+1}^{\binom{n}{i}} h_j^2 \right). \label{eq:hAh}
\end{align}
Note that $h_1=\langle \vec{h}, \vec{v}_1 \rangle = e(H)/\sqrt{n \choose k}$, and $\sum_{i=1}^{n \choose k} h_i^2=\langle \vec{h}, \vec{h}\rangle=e(H)=\sqrt{n\choose k} h_1$. Moreover, we know that for $i \ge 2$, $\lambda_i =(-1)^i \binom{n-k-i}{k-i} \ge -\binom{n-k-3}{k-3}$. Following from these facts, we have 
\begin{align}\label{initial_hoffman}
0 \ge \binom{n-k}{k}h_1^2 - \binom{n-k-1}{k-1}\left(\sum_{j=2}^{n}h_j^2 \right) -\binom{n-k-3}{k-3}  \left(\sqrt{n \choose k}h_1 - h_1^2 - \sum_{j=2}^n h_j^2 \right).
\end{align}
For $n \ge 2k+1$, since $(n-k-1)(n-k-2) \ge k(k-1) > k(k-2)$, we have
\begin{align*}
\binom{n-k-3}{k-3} 
&=\frac{k(k-2)}{(n-k-1)(n-k-2)} \cdot \frac{k-1}{n-k} \binom{n-k}{k} < \frac{k-1}{n-k}\binom{n-k}{k}.
\end{align*}
Therefore 
\begin{align}\label{hoffman}
0 \ge \binom{n-k}{k}h_1^2 - \binom{n-k-1}{k-1}\left(\sum_{j=2}^{n}h_j^2 \right) -\frac{k-1}{n-k} \binom{n-k}{k}  \left(\sqrt{n \choose k}h_1 - h_1^2 - \sum_{j=2}^n h_j^2 \right).
\end{align} 
Simplifying this inequality, we obtain
\begin{align}\label{hoffman_cor}
\sum_{j=2}^n h_j^2 
\ge (n-1) h_1^2 - (k-1)\sqrt{n \choose k} h_1 =\frac{n-1}{\binom{n}{k}}e(H)\left(e(H)-\frac{n}{k}\binom{n-2}{k-2}\right).
\end{align}

We now show that the vectors $\vec{u}_i=(s_{i2}, s_{i3}, \cdots, s_{in})$, $i=1, \cdots, n$, satisfy the assumptions in Lemma \ref{lem_simplex}. First, we have
\begin{align*}
\langle \vec{u}_i, \vec{u}_i \rangle &=s_{i2}^2+\cdots + s_{in}^2= \left(\sum_{j=1}^n s_{ij}^2 \right)-s_{i1}^2=\langle \vec{s}_i, \vec{s}_i \rangle - 
\langle \vec{s}_i, \vec{v}_1 \rangle^2\\
&=\binom{n-1}{k-1}- \binom{n-1}{k-1}^2/\binom{n}{k} =\frac{(n-k)k}{n^2} \binom{n}{k}
\end{align*}
So all the vectors $\vec{u}_i$ are of equal lengths. On the other hand, we have that for $i \ne j$, 
\begin{align*}
\langle \vec{u}_i, \vec{u}_j \rangle &= \langle \vec{s}_i, \vec{s}_j \rangle -s_{i1}s_{j1}=\langle \vec{s}_i, \vec{s}_j \rangle -\langle \vec{s}_i, \vec{v}_1 \rangle \langle \vec{s}_j, \vec{v}_1 \rangle\\ 
&=\binom{n-2}{k-2}-\binom{n-1}{k-1}^2/\binom{n}{k}=-\frac{k(n-k)}{n^2(n-1)} \binom{n}{k}
\end{align*}
Applying Lemma \ref{lem_simplex} with $\vec{v}=(h_2, \cdots, h_n)$, we find an index $i$, such that  
\begin{align}
\sum_{j=2}^n h_j s_{ij}&=\langle \vec{v}, \vec{u}_i \rangle 
\le -\frac{1}{n-1}\langle \vec{u}_i, \vec{u}_i \rangle^{1/2}\left(\sum_{j=2}^n h_j^2 \right)^{1/2} \nonumber\\
&=-\frac{1}{n-1}\left(\binom{n}{k} \cdot \frac{k(n-k)}{n^2}\right)^{1/2}  \left(\sum_{j=2}^{n}h_j^2 \right)^{1/2}. \label{eq:hs2}
\end{align}

On the other hand degree of the vertex $i$ in $H$ is equal to  $\langle \vec{h}, \vec{s}_i \rangle=\sum_{j=1}^n h_j s_{ij} \ge \binom{n-2}{k-2}$. Since  $h_1= e(H)/ \sqrt{n \choose k}$ and $s_{i1}=\binom{n-1}{k-1}/\sqrt{n \choose k}$, we also have 
\[
h_1 s_{i1} = \frac{e(H)}{\sqrt{\binom nk}} \frac{\binom{n-1}{k-1}}{\sqrt{\binom nk}} = \frac kn e(H).
\]
Consequently $\sum_{j=2}^n h_j s_{ij} \ge \binom{n-2}{k-2} - e(H) k/n$.
Together with \eqref{eq:hs2}, this gives
\[
e(H) \frac kn - \binom{n-2}{k-2} \ge \frac{1}{n-1}\left(\binom{n}{k} \cdot \frac{k(n-k)}{n^2}\right)^{1/2}  \left(\sum_{j=2}^{n}h_j^2 \right)^{1/2},
\]
which implies 
\begin{align}\label{simplex}
\sum_{j=2}^n h_j^2 & \le \left(e(H)-\frac{n}{k}\binom{n-2}{k-2}\right)^2\cdot \frac{(n-1)^2 k}{(n-k)\binom{n}{k}}.
\end{align}
Combining the inequalities \eqref{hoffman_cor} and \eqref{simplex}, we obtain that
\[
\frac{n-1}{\binom{n}{k}}e(H)\left(e(H)-\frac{n}{k}\binom{n-2}{k-2}\right) \le  \left(e(H)-\frac{n}{k}\binom{n-2}{k-2}\right)^2\cdot \frac{(n-1)^2 k}{(n-k)\binom{n}{k}},
\]
equivalently,
\[
\left (e(H)-\frac{n}{k}\binom{n-2}{k-2}\right) \left (e(H)- \binom{n-1}{k-1}\right)\ge 0
\]
Thus either $e(H) \le \frac{n}{k}\binom{n-2}{k-2}$ or $e(H) \ge \binom{n-1}{k-1}$.
In the former case we must have $e(H)=\frac{n}{k} \binom{n-2}{k-2}$ because of the minimum degree condition. From \eqref{hoffman_cor} and \eqref{simplex}, we derive that $\sum_{j=2}^n h_j^2=0$, and equalities are attained in \eqref{hoffman_cor} and \eqref{simplex} . Recall that we used $\binom{n-k-3}{k-3}<\frac{k-1}{n-k} \binom{n-k}{k}$ to get \eqref{hoffman} from \eqref{initial_hoffman}. We must have 
$$\sqrt{n \choose k}h_1 - h_1^2 - \sum_{j=2}^n h_j^2=0,$$
equivalently $h_1 = \sqrt{\binom nk}$ otherwise \eqref{hoffman} becomes a strict inequality. 
It follows that $e(H)=\sqrt{\binom{n}{k}}h_1 = {n \choose k}>\frac{n}{k}\binom{n-2}{k-2}$, a contradiction.

Therefore $e(H) \ge \binom{n-1}{k-1}$. For $n \ge 2k+1$, since the $1$-star is the unique example that attains the maximum in the Erd\H os-Ko-Rado theorem, the intersecting hypergraph $H$ must be the $1$-star, which completes the proof.
\qed

\begin{remark}\label{remark}
We would like to point out that the $n \ge 2k+1$ bound in Theorem \ref{degree_ekr} is sharp. 
Indeed, unlike the normal Erd\H os-Ko-Rado theorem, this degree analogue may no longer be valid for $n=2k$. For example, when $n=6$ and $k=3$, $H=\{123, 234, 345, 451, 512, 136, 246, 356, 256, 146\}$ is an intersecting hypergraph in which every vertex has degree exactly $5$, which is strictly greater than $\binom{n-2}{k-2}=4$. In general, for $n=2k$ and $k$ large, we can partition all the $k$-subsets of $[2k]$ into $\binom{2k}{k}/2$ pairs, and randomly select exactly one $k$-subset from each pair. Such hypergraph is clearly intersecting, and the expected vertex degree is equal to $(\binom{2k}{k}/2)\cdot (k/(2k))=\binom{2k}{k}/4$, which is strictly greater than $\binom{n-2}{k-2}=\frac{k-1}{4k-2}\binom{2k}{k}$. 
\end{remark}

The spectral method, together with some Cauchy-Schwarz type inequalities, allows us to prove Theorem \ref{thm_cross}, which is a generalization of Theorem \ref{degree_ekr} to cross-intersecting families.\\

\noindent \textbf{Proof of Theorem \ref{thm_cross}: }
We assume that $\vec{v}_1, \cdots, \vec{v}_{\binom{n}{k}}$ are the same eigenvectors of the adjacency matrix $A$ of the Kneser graph as in the previous proof, and define $\vec{s}_i$, $s_{ij}$, and $\vec{u}_i$ as before. Let $\vec{b}$ and $\vec{c}$ be the characteristic vectors of $\mathcal{B}$ and $\mathcal{C}$. That $\mathcal{B}$ and $\mathcal{C}$ are cross-intersecting implies that $\vec{b}^T \cdot A \cdot \vec{c}=0$. Suppose $\vec{b}=\sum_i b_i \vec{v}_i$ and $\vec{c}=\sum_i c_i \vec{v}_i$. Let $B=\sum_{j=2}^n b_j^2$ and $C=\sum_{j=2}^n c_j^2$. Similarly as \eqref{eq:hAh}, we have 
\begin{align*}
0 &=\binom{n-k}{k}b_1c_1-\binom{n-k-1}{k-1}\sum_{j=2}^n b_jc_j + \sum_{i=2}^k \lambda_i \cdot \left(\sum_{j=\binom{n}{i-1}+1}^{\binom{n}{i}} b_jc_j \right)\\
&\ge \binom{n-k}{k}b_1c_1-\binom{n-k-1}{k-1} \sqrt{BC} - 
\frac{k-1}{n-k}\binom{n-k}{k} \left(\sum_{j=n+1}^{\binom{n}{k}} b_jc_j \right),
\end{align*}
by using $\lambda_i \ge -\binom{n-k-3}{k-3}> -\frac{k-1}{n-k} \binom{n-k}{k}$ for $i \ge 2$. Applying the Cauchy-Schwarz inequality, we further have
\begin{align}\label{eq:bc1}
0 \ge \binom{n-k}{k}b_1c_1-\binom{n-k-1}{k-1} \sqrt{BC} - 
\frac{k-1}{n-k}\binom{n-k}{k} \left(\sum_{j=n+1}^{\binom{n}{k}} b_j^2\right)^{1/2} \left(\sum_{j=n+1}^{\binom{n}{k}} c_j^2\right)^{1/2}.
\end{align}
Note that $\sum_{j=n+1}^{n\choose k} b_j^2 = |\mathcal{B}|-|\mathcal{B}|^2/\binom{n}{k}-B$ and $\sum_{j=n+1}^{n\choose k} c_j^2 = |\mathcal{C}|-|\mathcal{C}|^2/\binom{n}{k}-C$. Since 
\begin{align} \label{eq:cs}
\sqrt{(x_1-y_1)(x_2-y_2)} \le \sqrt{x_1 x_2}-\sqrt{y_1 y_2}
\end{align}
for all $x_1 \ge y_1 \ge 0$ and $x_2 \ge y_2 \ge 0$, we have
\begin{align}
\left(\sum_{j=n+1}^{n\choose k}b_j^2\right)^{1/2} \left(\sum_{j=n+1}^{n\choose k}c_j^2\right)^{1/2} 
&\le \sqrt{\left( |\mathcal{B}|-|\mathcal{B}|^2/\binom{n}{k} \right) \left(|\mathcal{C}|-|\mathcal{C}|^2/\binom{n}{k}\right) }-\sqrt{BC} \nonumber\\
&\le \sqrt{|\mathcal{B}||\mathcal{C}|}-|\mathcal{B}||\mathcal{C}|/\binom{n}{k}-\sqrt{BC}. \label{eq:bc2}
\end{align}
Using $b_1=|\mathcal{B}|/\sqrt{n \choose k}$ and $c_1=|\mathcal{C}|/\sqrt{n \choose k}$, we can combine \eqref{eq:bc1} and \eqref{eq:bc2} and obtain:
\begin{align}\label{cross_1}
\sqrt{BC} \ge \frac{n-1}{\binom{n}{k}}\sqrt{|\mathcal{B}||\mathcal{C}|}\left(\sqrt{|\mathcal{B}||\mathcal{C}|}-\frac{n}{k}\binom{n-2}{k-2}\right).
\end{align}
Denote by $\beta$ (resp. $\gamma$) the vector formed by taking the second to the $n$-th coordinate of $\vec{b}$ (resp. $\vec{c}$). Recall that $\vec{u}_i$, $1\le i\le n$, satisfy the conditions of Lemma \ref{lem_simplex}. Applying Lemma \ref{lem_simplex} twice, we find indices $i, j$ such that
\[
\langle \vec{\beta}, \vec{u}_i \rangle \le -\frac{1}{n-1}\left(\binom{n}{k}\frac{k(n-k)}{n^2}\right)^{1/2} B^{1/2}, \quad\text{and} \quad
\langle \vec{\gamma}, \vec{u}_j \rangle \le -\frac{1}{n-1}\left(\binom{n}{k}\frac{k(n-k)}{n^2}\right)^{1/2} C^{1/2}.
\]
Since $b_1s_{i1} = |\mathcal{B}| k/n$ and $c_1 s_{j1} = |\mathcal{C}| k/n$, it follows that
\begin{align}
\langle \vec{b}, \vec{s}_i \rangle \langle \vec{c}, \vec{s}_j \rangle &= (b_1s_{i1} + \langle \vec{\beta}, \vec{u}_i \rangle)(c_1s_{i1} + \langle \vec{\gamma}, \vec{u}_i \rangle) \nonumber \\
&\le \left(\frac{k}{n}|\mathcal{B}| -\frac{1}{n-1}\left(\binom{n}{k} \frac{k(n-k)}{n^2}\right)^{1/2} B^{1/2}\right)\left(\frac{k}{n}|\mathcal{C}|-\frac{1}{n-1}\left(\binom{n}{k} \frac{k(n-k)}{n^2}\right)^{1/2} C^{1/2}\right) \nonumber \\
&\le \left(\frac{k}{n}\sqrt{|\mathcal{B}||\mathcal{C}|}-\frac{1}{n-1}\left(\binom{n}{k} \frac{k(n-k)}{n^2}\right)^{1/2} (BC)^{1/4} \right)^2, \label{eq:bscs}
\end{align}
where we apply \eqref{eq:cs} in the last inequality.
Now suppose the conclusion is false, that is, $\delta_1(\mathcal{B})\delta_1(\mathcal{C})>\binom{n-2}{k-2}^2$. 
Then $\langle \vec{b}, \vec{s}_i \rangle \langle \vec{c}, \vec{s}_j \rangle>\binom{n-2}{k-2}^2$.
Together with \eqref{eq:bscs}, this implies that  
\begin{align} \label{cross_2}
\sqrt{BC}< \left(\sqrt{|\mathcal{B}||\mathcal{C}|}-\frac{n}{k}\binom{n-2}{k-2}\right)^2 \cdot \frac{(n-1)^2 k}{(n-k)\binom{n}{k}}.
\end{align}
The inequalities \eqref{cross_1} and \eqref{cross_2} together give 
$$\left(\sqrt{|\mathcal{B}||\mathcal{C}|}-\frac{n}{k}\binom{n-2}{k-2}\right)\left(\sqrt{|\mathcal{B}||\mathcal{C}|}-\binom{n-1}{k-1}\right)>0.$$
Therefore we have $|\mathcal{B}||\mathcal{C}| >\binom{n-1}{k-1}^2$. Pyber's theorem tells us that in this case, the families $\mathcal{B}$ and $\mathcal{C}$ cannot be cross-intersecting, which contradicts the assumption and proves $\delta_1(\mathcal{B})\delta_1(\mathcal{C}) \le \binom{n-2}{k-2}^2$.
\qed

\section{Larger matching number} \label{sec_general}
In this section we prove Theorem~\ref{thm:degErdos} alone with its fractional version. 
Our proof of Theorem~\ref{thm:degErdos} needs the following result from \cite{huang-loh-sudakov}.
\begin{lemma}\cite[Corollary 3.2]{huang-loh-sudakov} \label{rainbow_matching}
If a $k$-uniform hypergraph $H$ on $n$ vertices has $s$ distinct vertices $v_1, \cdots, v_s$ with degrees $\deg(v_i) > 2(s-1)
\binom{n-2}{k-2}$ and $ks<n$, then $H$ contains $s$ disjoint edges. 
\end{lemma}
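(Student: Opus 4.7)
The plan is to prove the lemma by induction on $s$. The base case $s=1$ is immediate: $\deg(v_1)>0$ produces an edge containing $v_1$, which is a matching of size $1$. For the inductive step with $s\ge 2$, I would first locate an edge $e_s\in H$ such that $v_s\in e_s$ and $e_s\cap\{v_1,\dots,v_{s-1}\}=\emptyset$. Such $e_s$ exists because the co-degree $\deg(v_s,v_j)\le\binom{n-2}{k-2}$ implies that at most $(s-1)\binom{n-2}{k-2}$ edges through $v_s$ contain another special vertex, so more than $(s-1)\binom{n-2}{k-2}$ eligible edges remain. Having chosen $e_s$, I pass to the induced sub-hypergraph $H':=H[V(H)\setminus V(e_s)]$ on $n':=n-k$ vertices; the condition $ks<n$ gives $k(s-1)<n-k=n'$, so the ambient size hypothesis of the induction is preserved.

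The technical heart of the argument is to verify that each remaining distinguished vertex $v_i$ with $i<s$ retains enough degree in $H'$ to satisfy the inductive hypothesis at level $s-1$, namely $\deg_{H'}(v_i)>2(s-2)\binom{n'-2}{k-2}$. Since $v_i\notin V(e_s)$ and any fixed vertex $u\ne v_i$ lies in at most $\binom{n-2}{k-2}$ edges through $v_i$, removing $V(e_s)$ costs at most $(k-1)\binom{n-2}{k-2}$ edges at $v_i$. This yields the naive estimate
\[
\deg_{H'}(v_i) > (2s-k-1)\binom{n-2}{k-2},
\]
so induction closes provided $(2s-k-1)\binom{n-2}{k-2}\ge 2(s-2)\binom{n-k-2}{k-2}$. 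Using $n>ks$, the ratio $\binom{n-k-2}{k-2}/\binom{n-2}{k-2}=\prod_{j=0}^{k-3}(1-k/(n-2-j))$ is bounded sufficiently away from $1$ to absorb the deficit $k-3$ on the left-hand side, making the inequality hold in the generic regime.

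The main obstacle is the regime where $k$ is large compared with $s$ (including the base of the induction $s=2$ with $k\ge 3$), where $2s-k-1\le 0$ and the naive worst-case degree drop wipes out the buffer entirely. I would overcome this by replacing the arbitrary choice of $e_s$ with an averaging argument: among the $>(s-1)\binom{n-2}{k-2}$ candidate edges through $v_s$ avoiding $\{v_1,\dots,v_{s-1}\}$, the expected number of edges destroyed at each $v_i$ is strictly smaller than $(k-1)\binom{n-2}{k-2}$ because the $k-1$ non-$v_s$ slots of a random candidate are spread roughly uniformly over the $n-s$ non-special vertices; each specific edge $f$ through $v_i$ is killed by only a small fraction of the candidates. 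The condition $ks<n$ is exactly what guarantees enough room in $V\setminus\{v_1,\dots,v_s\}$ for this spreading. Choosing an $e_s$ that simultaneously beats the average for every $i<s$ (which exists by a union-bound / pigeonhole over the $s-1$ constraints against the total averaged slack) then closes the induction in all parameter ranges $n\ge ks+1$.
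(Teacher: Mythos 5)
The paper does not prove this lemma; it is imported verbatim as Corollary~3.2 of Huang--Loh--Sudakov, so there is no in-paper proof to match against. On its own terms, your delete-one-edge induction has a genuine gap. A small slip first: $V(e_s)$ has $k$ vertices, none equal to $v_i$, so deleting it can destroy up to $k\binom{n-2}{k-2}$ edges at $v_i$, giving only $\deg_{H'}(v_i)>(2s-k-2)\binom{n-2}{k-2}$. You also have the hard regime inverted: as $n\to\infty$ the ratio $\binom{n-k-2}{k-2}/\binom{n-2}{k-2}\to 1$, so the deficit of order $(k-2)\binom{n-2}{k-2}$ is never absorbed by shrinkage of the target, and for $k\ge 3$ the naive step fails for all large $n$, not just in a corner case.

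The averaging rescue cannot make up this factor of roughly $k$. For a random candidate $e\in\mathcal{E}$, the damage at $v_i$ is at most $\sum_{u\in e\setminus\{v_s\}}\deg(v_i,u)$, and the only bound the hypotheses give is $\deg(v_i,u)\le\binom{n-2}{k-2}$ for each $u$; nothing prevents the link of $v_s$ from concentrating on exactly those $u$ where $v_i$ has maximal codegree, so $\mathbb{E}_e[D_i(e)]$ is not generically below $(k-1)\binom{n-2}{k-2}$ by any useful margin. Moreover $|\mathcal{E}|$ is guaranteed only to exceed $(s-1)\binom{n-2}{k-2}$, which is far too small to dilute the damage down to the $\approx 2\binom{n-2}{k-2}$ that the inductive hypothesis requires, so a Markov/union bound over the $s-1$ constraints does not close. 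Even your base step ($s=2$, $k\ge 3$) exhibits the problem: one must find an edge through $v_2$ avoiding $v_1$ that misses some edge through $v_1$, and a Cauchy--Schwarz count of intersecting pairs only yields $\sqrt{|\mathcal{E}||\mathcal{F}|}\le(k-1)\binom{n-2}{k-2}$, no contradiction for any $k\ge 2$. That case instead follows from a cross-intersecting argument (project the two near-stars onto $(k-1)$-sets of $[n]\setminus\{v_1,v_2\}$ and invoke Pyber's bound, equivalently Theorem~\ref{degree_ekr} here), and Huang--Loh--Sudakov handle general $s$ by a global argument about a maximum rainbow matching rather than by greedy edge-deletion, which inherently loses a factor of order $k$ per step that the hypothesis $\deg(v_i)>2(s-1)\binom{n-2}{k-2}$ cannot afford.
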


\medskip
\noindent \textbf{Proof of Theorem~\ref{thm:degErdos}.}
We need to show that every $k$-uniform hypergraph $H$ on $n\ge 3k^2 s$ vertices with $\delta_1(H)> \binom{n-1}{k-1}-\binom{n-s}{k-1}$ contains a matching of size $s$. We prove this by induction on $s$. The $s=2$ case follows from Theorem \ref{degree_ekr} immediately because $n\ge 6k^2 > 2k$. Suppose that assertion holds for $s-1$. Let $H$ be a $k$-uniform hypergraph $H$ with $\delta_1(H)> \binom{n-1}{k-1}-\binom{n-s}{k-1}$. First 
assume $\deg(v) > k(s-1)\binom{n-2}{k-2}$ for some vertex $v$. Let $H_v$ be the sub-hypergraph of $H$ induced on $V(H) \setminus \{v\}$. For every vertex $u \in V(H_v)$, $u$ and $v$ are contained in at most $\binom{n-2}{k-2}$ edges of $H$. Therefore the degree of $u$ in $H_v$ is strictly greater than
\[
\binom{n-1}{k-1}-\binom{n-s}{k-1}-\binom{n-2}{k-2}=\binom{n-2}{k-1}-\binom{n-s}{k-1}.
\]
Since $3k^2\le n/s \le (n-1)/(s-1)$, we can apply the induction hypothesis and obtain a matching of size $s-1$ in 
$H_v$ Each of the $(s-1)k$ vertices in this matching is contained at most $\binom{n-2}{k-2}$ edges with $v$. Since $\deg_H(v)>k(s-1) \binom{n-2}{k-2}$, there exists an edge containing $v$ but disjoint from the $s-1$ edges in the matching -- together they form a matching of size $s$ in $H$.

We may thus assume that the maximum degree of $H$ is at most $k(s-1)\binom{n-2}{k-2}$. 	
Without loss of generality, assume that $\deg(v_1) \ge \deg(v_2) \ge \cdots \ge \deg(v_n)$.
If $\deg(v_s)>2(s-1)\binom{n-2}{k-2}$, then by Lemma \ref{rainbow_matching}, $H$ contains $s$ disjoint edges $e_1, \cdots, e_s$ such that $e_i$ contains $v_i$.  What remains to consider is the case when $\deg(v_1), \cdots, \deg(v_{s-1}) \le k(s-1) \binom{n-2}{k-2}$, and $\deg(v_s), \cdots, \deg(v_n) \le 2(s-1)\binom{n-2}{k-2}$. We first apply the inductive hypothesis and obtain a 
matching $M$ of size $s-1$ in $H$ (one smaller than we would like to prove). The sum of the degrees of the $(s-1)k$ vertices in $M$ is at most 
\[
(s-1) \cdot k(s-1)\binom{n-2}{k-2}+(s-1)(k-1)\cdot 2(s-1)\binom{n-2}{k-2} = (3k-2) (s-1)^2 \binom{n-2}{k-2}.
\] 
On the other hand, $e(H)$, the number of edges of $H$, is greater than $\frac{n}{k}\left(\binom{n-1}{k-1}-\binom{n-s}{k-1}\right) > \frac{n}{k} (s-1) \binom{n-s}{k-2} $.
Note that  
\[
\binom{n-s}{k-2} / \binom{n-2}{k-2} > \left( 1 - \frac{s-2}{n-k} \right)^{k-2} \ge 1 - \frac{(s-2)(k-2)}{n-k} \ge 1 - \frac{1}{3k}
\]
because $n - k\ge 3k (s-2)(k-2)$. Therefore, 
\[
e(H) > \frac{n(s-1)}{k} \left(1- \frac{1}{3k} \right)\binom{n-2}{k-2} > (3k-2) (s-1)^2 \binom{n-2}{k-2}
\]
because $n\ge 3k^2 s$. Since the sum of the degrees of the vertices in $M$ is smaller than $e(H)$, there is an edge of $H$ disjoint from $M$, which together with $M$ gives a matching of size $s$ as desired.
\qed

\bigskip

If we are interested in fractional matchings, then we can prove an analog of Theorem~\ref{thm:degErdos} with a linear bound on $n$ by using the result of Frankl \cite{frankl-general} .
A \textit{fractional matching} in $H$ is a function $w: E(H) \rightarrow [0, 1]$ such that for each $v \in V(H)$, we have $\sum_{v \in e} w(e) \le 1$. The \emph{size} of a fractional matching is $\sum_{e} w(e)$. The size of 
the largest fractional matching of $H$ is called the \emph{fractional matching number} of $H$ and usually denoted by $\nu^*(H)$. Trivially $\nu^*(H) \ge \nu(H)$. 
By the strong duality, $\nu^*(H)$ is equal to the fractional covering number $\tau^*(H)$, which is the minimum of $\sum_{v \in V(H)} f(v)$ over all functions $f: V(H) \rightarrow [0, 1]$ such that for each edge $e \in E(H)$, we have $\sum_{v \in e} f(v) \ge 1$. 

Given integers $n\ge k> d\ge 0$ and a real number $0< s\le n/k$, define $f^s_d(k, n)$ to be the smallest integer $m$ such that every $k$-graph $H$ on $n$ vertices with $\delta_d (H) \geq m$ contains a fractional matching of size $s$.
The following (simple) proposition relates  $f^s_1(k, n)$ to $f_0^s(k-1, n-1)$. Its proof is almost identical to the one of \cite[Proposition 1.1]{afhrrs} -- we include it for completeness.
\begin{prop} \label{lemma_reduction_edge}
For $k\ge 3$ and $s\le n/k$, we have $f^s_1(k, n)\le f_0^s(k-1, n-1)$.
\end{prop}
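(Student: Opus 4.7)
My plan is to prove the contrapositive via LP duality. Set $m := f_0^s(k-1,n-1)$ and suppose $H$ is a $k$-graph on $n$ vertices with $\delta_1(H) \ge m$; since $\nu^*(H) = \tau^*(H)$, it suffices to show that every fractional vertex cover $f:V(H)\to\mathbb{R}_{\ge 0}$ of $H$ has $\sum_v f(v) \ge s$. I will argue by contradiction: assume some such $f$ satisfies $\sum_v f(v) < s$, and transport $f$ into a fractional cover of the link $H_{v_0}$ of a carefully chosen vertex. The contradiction will come from the fact that $e(H_{v_0}) \ge m$ forces $\tau^*(H_{v_0}) \ge s$ by the very definition of $f_0^s$.

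The two design decisions are the vertex $v_0$ and the scaling. I will take $v_0$ to attain the minimum value $\alpha := f(v_0)$. Averaging gives $\alpha \le (\sum_v f(v))/n < s/n \le 1/k$, and this last inequality is the only place the hypothesis $s \le n/k$ is used. On $V(H)\setminus\{v_0\}$ I then define
\[
g(u) := \frac{f(u) - \alpha}{1 - k\alpha},
\]
which is nonnegative since $f(u) \ge \alpha$ and $1 - k\alpha > 0$. For any $(k-1)$-edge $e' \in E(H_{v_0})$ the set $e' \cup \{v_0\}$ lies in $E(H)$, so $\sum_{u \in e'} f(u) \ge 1 - \alpha$; substituting and cancelling yields $\sum_{u \in e'} g(u) \ge 1$, so $g$ is a legitimate fractional cover of the $(k-1)$-graph $H_{v_0}$ on $n-1$ vertices.

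To finish, summing gives $\tau^*(H_{v_0}) \le \sum_{u \ne v_0} g(u) = (\sum_v f(v) - n\alpha)/(1 - k\alpha)$. Combining with $\tau^*(H_{v_0}) \ge s$ (which holds because $e(H_{v_0}) = \deg_H(v_0) \ge \delta_1(H) \ge m$) and rearranging gives $\sum_v f(v) \ge s + \alpha(n - ks) \ge s$, where the last inequality uses $n \ge ks$; this contradicts $\sum_v f(v) < s$. The only nontrivial step is engineering the affine shift and rescaling in the definition of $g$: it must preserve the cover inequality for every edge of the link, and the resulting arithmetic must collapse to use precisely the hypothesis $s \le n/k$. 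With the correct choice above, both happen simultaneously, and everything else is routine.
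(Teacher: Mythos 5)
Your proof is correct and is essentially the same argument as the paper's: both pass to the link of a vertex of minimum cover weight, invoke the definition of $f_0^s(k-1,n-1)$ to conclude that this link has fractional matching (hence fractional covering) number at least $s$, and translate that bound back to $\tau^*(H)$. The only cosmetic difference is that the paper pairs the link's fractional matching directly against the cover of $H$, whereas you rescale that cover via $g=(f-\alpha)/(1-k\alpha)$ into a fractional cover of the link and compare sizes; these are dual presentations of the same computation.
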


\begin{proof}
Let $s \le n/k$ and $H$ be a $k$-uniform hypergraph on $H$ on $n$ vertices with $\delta_1(H) \ge f_0^s(k-1, n-1)$.
Since $\nu^*(H)=\tau^*(H)$, it suffices to show that $\tau^*(H) \ge s$. Suppose the vertices of $H$ are indexed by $[n]$, and $\tau^*(H)=\sum_{i=1}^n x_i$, with $\sum_{i \in e} x_i \ge 1$ for all edges $e \in E(H)$. Without loss of generality, we may assume that $x_1 \ge x_2 \ge \cdots \ge x_n \ge 0$. The assumption that 
$\delta_1(H) \ge f_0^s(k-1, n-1)$ means that there are at least $f_0^s(k-1, n-1)$ $(k-1)$-subsets of $\{x_1, \cdots, x_{n-1}\}$ whose sum with $x_n$ is at least $1$.

We consider the hypergraph $H'$ formed by all the $(k-1)$-sets $S$ of $\{1, \cdots, n-1\}$ such that $\left(\sum_{i \in S}x_i \right)+ x_n \ge 1$, then $H'$ is a $(k-1)$-uniform hypergraph on $n-1$ vertices, with $e(H') \ge f_0^s(k-1, n-1)$. By the definition of $f_0^s(k-1, n-1)$, there is a fractional matching of size $s$ in $H'$. Denote by $w: E(H') \rightarrow [0, 1]$ this fractional matchings, then for every $i$, 
$\sum_{i \in e} w(e) \le 1$, and $\sum_{e} w(e) = s$.
Then since for every $i$, $x_i \ge x_n$, and for every edge $e \in E(H')$, $\sum_{j \in e} x_j \ge 1-x_n$, and moreover $n\ge ks$,
\begin{align*}
x_1+\cdots+x_n &=\sum_{i} (\sum_{i \in e} w(e))x_i + \sum_i (1-\sum_{i \in e} w(e))x_i\\
&\ge \sum_e \sum_{i \in e} x_i + (\sum_i (1- \sum_{i \in e} w(e)))x_n\\
&\ge s(1-x_n) + (ks-(k-1)s)x_n=s.
\end{align*}
Therefore we have $\nu^*(H) =\sum_{i=1}^n x_i \ge s$.
\end{proof}

Frankl's \cite{frankl-general} proved Conjecture~\ref{conj:Erdos} for $n \ge (2s-1)k -s+1$, which implies that $f^s_0(k, n)= \binom{n}{k} - \binom{n-s+1}{k} + 1$. By Proposition \ref{lemma_reduction_edge}, when $n-1\ge (2s-1)(k-1)-s+1$, we have
\[
f^s_1(k, n) \le f^s_0(n-1, k-1)= \binom{n-1}{k-1} - \binom{n-s}{k-1} + 1.
\]
We rewrite this as the following corollary.
\begin{coro}
If $n \ge (2s-1)(k-1)-s+2$ and $H$ is a $k$-uniform hypergraph on $n$ vertices with $\delta_1(H) > \binom{n-1}{k-1}-\binom{n-s}{k-1}$, then $\nu^*(H) \ge s$.
\end{coro}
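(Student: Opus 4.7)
The proof is essentially contained in the paragraph preceding the statement: the corollary simply assembles Proposition \ref{lemma_reduction_edge} with Frankl's recent resolution of Conjecture \ref{conj:Erdos} in the linear range. My plan is to formalize these two ingredients in sequence. First, I would rewrite the hypothesis $n \ge (2s-1)(k-1) - s + 2$ as $n - 1 \ge (2s-1)(k-1) - s + 1$, which is exactly the range in which Frankl \cite{frankl-general} proved Conjecture \ref{conj:Erdos} for $(k-1)$-uniform hypergraphs on $n-1$ vertices. Since any integer $s$-matching is in particular a fractional matching of size $s$, Frankl's theorem yields
$$f_0^s(k-1,\, n-1) \;\le\; \binom{n-1}{k-1} - \binom{n-s}{k-1} + 1.$$

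Second, I would invoke Proposition \ref{lemma_reduction_edge} to transfer this edge-count bound to a minimum-degree bound, giving
$$f_1^s(k,\, n) \;\le\; f_0^s(k-1,\, n-1) \;\le\; \binom{n-1}{k-1} - \binom{n-s}{k-1} + 1.$$
By definition of $f_1^s$, any $k$-graph $H$ on $n$ vertices with $\delta_1(H) > \binom{n-1}{k-1} - \binom{n-s}{k-1}$ therefore satisfies $\delta_1(H) \ge f_1^s(k,n)$, and hence $\nu^*(H) \ge s$, which is the desired conclusion. A quick check that the hypothesis forces $s \le n/k$ (needed for Proposition \ref{lemma_reduction_edge}, and in any case necessary for $\nu^*(H) \ge s$ to be meaningful) follows from $(s-1)(k-3) \ge 0$ when $k \ge 3$.

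The only mildly nontrivial step — and what I would identify as the main (and still quite minor) obstacle — is a binomial comparison internal to Frankl's theorem: one must confirm that the "$1$-star" construction $\binom{n-1}{k-1} - \binom{n-s}{k-1}$ dominates the "clique" construction $\binom{(k-1)s-1}{k-1}$ in the assumed range, so that the former is genuinely $f_0^s(k-1,n-1) - 1$. I would handle this by the standard estimate
$$\binom{n-1}{k-1} - \binom{n-s}{k-1} \;\ge\; (s-1)\binom{n-s}{k-2},$$
comparing ratios under $n - 1 \ge (2s-1)(k-1) - s + 1$; once this comparison is dispatched, the corollary follows immediately from the two cited results.
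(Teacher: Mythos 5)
Your proof is correct and follows the paper's argument exactly: reduce via Proposition~3.3 to the edge-count threshold $f_0^s(k-1,n-1)$, then invoke Frankl's resolution of Conjecture~1.3 in the range $n-1 \ge (2s-1)(k-1)-s+1$. The ``mildly nontrivial obstacle'' you flag (showing the star term dominates $\binom{(k-1)s-1}{k-1}$) is actually unnecessary, since Frankl's theorem in this range is already stated directly as $e(H) \le \binom{n}{k}-\binom{n-s+1}{k}$ rather than as the max of two quantities; your observation that the hypothesis forces $s\le n/k$ is a useful detail the paper omits.
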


\section{Concluding remarks}
In this paper, we consider the degree versions of the Erd\H os hypergraph matching conjecture and settle the problem for the case when the matching number is one, proving a degree version of the Erd\H os-Ko-Rado theorem. Many intriguing problems with respect to minimum degrees remain open.

\begin{list}{\labelitemi}{\leftmargin=1em}
\item Hilton and Milner \cite{hilton-milner} proved that if $n \ge 2k+1$, and $H$ is an intersecting hypergraph that is \textit{non-trivial}, meaning that the edges of $H$ do not share a common vertex, then $e(H) \le \binom{n-1}{k-1}-\binom{n-k-1}{k-1}+1$. The edge set of the extremal graph $HM_{n,k}$ consists of a $k$-subset $S$, and all the $k$-sets containing a fixed element $v \not \in S$ and intersects with $S$. It is not hard to see that the minimum degree $\delta_1(HM_{n, k})=\binom{n-2}{k-2}-\binom{n-k-2}{k-2}$. The following question naturally arises: is it true that for $n \ge 2k+1$, if $H$ is a non-trivial intersecting hypergraph, then $\delta_1(H) \le \binom{n-2}{k-2}-\binom{n-k-2}{k-2}$? 
In a forthcoming paper, we prove this for sufficiently large $n$.

\item In Theorem \ref{thm_cross}, we show that for two cross-intersecting families $\mathcal{B}$ and $\mathcal{C}$, the product of the minimum degrees $\delta_1(\mathcal{B}) \delta_1(\mathcal{C}) \le \binom{n-2}{k-2}^2$. Could it be true that for sufficiently large $n$, indeed we can have $\min_{i \in [n]} \deg_\mathcal{B}(i) \deg_\mathcal{C}(i) \le \binom{n-2}{k-2}^2$? In other words, if the product of the degrees of the corresponding vertices in $\mathcal{B}$ and $\mathcal{C}$ is large, does this guarantee the existence of disjoint cross pairs?

\item Recall that $H^1_{n, k, s}$ is the $k$-uniform hypergraph on $n$ vertices whose vertex set contains a set $S$ of $s-1$ vertices, and whose edge set consists of all $k$-sets $e$ such that $|e\cap S|\ge 1$. 
Theorem~\ref{thm:degErdos}  shows that when $n\ge 3k^2 s$, the largest minimum vertex degree of a $k$-uniform hypergraph not containing a matching of size $s$ is attained by $H^1_{n, k, s}$. We conjecture the same holds when $n> ks$  (the $n=ks$ case is excluded because of Remark~\ref{remark}).
\begin{conj}
Given positive integers $s, k, n$ such that $s< n/k$, let $H$ be a $k$-uniform hypergraph $H$ on $n$ vertices. If $\delta_1(H)>  \binom{n-1}{k-1}-\binom{n-s}{k-1}$, then $H$ contains a matching of size $s$.
\end{conj}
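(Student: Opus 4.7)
The plan is to proceed by induction on $s$, paralleling the strategy of Theorem~\ref{thm:degErdos} but without the luxury of working with $n$ quadratic in $k$. The base case $s=2$ follows directly from Theorem~\ref{degree_ekr}: the hypothesis $n>ks$ becomes $n\ge 2k+1$, and since any hypergraph with $\delta_1(H)>\binom{n-2}{k-2}$ cannot be a $1$-star (whose non-center vertices have degree exactly $\binom{n-2}{k-2}$, using $\binom{n-1}{k-1}-\binom{n-2}{k-1}=\binom{n-2}{k-2}$), Theorem~\ref{degree_ekr} delivers a matching of size $2$. For the inductive step, observe that the hypothesis for $s$ is strictly stronger than that for $s-1$, so the inductive hypothesis yields a matching $M=\{e_1,\ldots,e_{s-1}\}$ of size $s-1$; the task is to augment $M$ by one edge.

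Let $U=V(H)\setminus V(M)$, so $|U|=n-k(s-1)\ge k+1$ by the assumption $n>ks$. If $H[U]$ contains any edge we are done, so suppose $U$ is independent and every edge of $H$ meets $V(M)$. The naive bound $\deg(u)\le \binom{n-1}{k-1}-\binom{|U|-1}{k-1}$ for $u\in U$ does \emph{not} yield a contradiction, since $k(s-1)>s-1$ for $k\ge 2$ means the resulting upper bound exceeds the hypothesized lower bound on $\delta_1(H)$. A finer attack must exploit the non-uniqueness of $M$: for each $e_i$ one can attempt to swap it for a different edge $e_i'$ using the minimum degree condition applied to vertices of $e_i$; ranging over all such swaps, every resulting maximum matching has its vertex set meeting every edge of $H$, which severely constrains the structure of $H$.

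The principal obstacle is to convert these structural constraints into a full \emph{stability} statement: one must show that $\nu(H)<s$ together with $\delta_1(H)$ close to $\binom{n-1}{k-1}-\binom{n-s}{k-1}$ forces $H$ to be a sub-hypergraph of the extremal example $H^1_{n,k,s}$. Since $H^1_{n,k,s}$ attains equality in the degree bound, the strict inequality in our hypothesis would then rule out this case and close the induction. Establishing such a stability theorem in the tight range $n>ks$ is precisely where the crude edge-counting arguments used in the proof of Theorem~\ref{thm:degErdos} break down; one would likely need either refined Frankl-type shifting (handled carefully, since shifting may decrease the minimum vertex degree of individual vertices) or an algebraic/spectral approach in the spirit of Section~\ref{sec_degree-ekr}, adapted to an appropriate Kneser-like auxiliary graph whose independent sets correspond to hypergraphs with matching number less than $s$.
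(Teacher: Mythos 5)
You are attempting to prove a statement that the paper explicitly poses as an open \emph{conjecture} rather than a theorem: the authors prove the $s=2$ case (Theorem~\ref{degree_ekr}) and the range $n\ge 3k^2 s$ (Theorem~\ref{thm:degErdos}), and cite K\"uhn--Osthus--Treglown for $k=3$ and large $n$, but the full range $n>ks$ is left unresolved. So there is no ``paper's own proof'' to compare against.

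Your base case is correct: for $s=2$ the hypothesis $n>2k$ gives $n\ge 2k+1$, and since a $1$-star has a non-center vertex of degree exactly $\binom{n-2}{k-2}$, the strict inequality $\delta_1(H)>\binom{n-2}{k-2}$ rules out the $1$-star case in Theorem~\ref{degree_ekr}, yielding two disjoint edges. Your computation in the inductive step is also accurate: with $|U|=n-k(s-1)$ the bound $\deg(u)\le\binom{n-1}{k-1}-\binom{|U|-1}{k-1}$ does not contradict the hypothesis because $|U|-1 = n-ks+k-1 < n-s$ for $s\ge 2$, $k\ge 2$. But from there on you do not actually supply an argument; you name the missing ingredient (a stability theorem showing that $\nu(H)<s$ and near-extremal $\delta_1(H)$ force $H\subseteq H^1_{n,k,s}$) and observe that swapping edges of $M$ ``severely constrains the structure of $H$,'' without deriving any concrete consequence or reaching a contradiction. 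That stability statement in the tight range $n>ks$ is essentially the whole content of the conjecture, so what you have is a restatement of the difficulty, not a proof. In particular, the alternatives you float---shifting (which, as you note, does not preserve minimum vertex degree) or an algebraic argument via an auxiliary Kneser-type graph whose independent sets encode $\nu<s$---are not carried out, and neither is known to work in this range. The proposal should therefore be regarded as a plausible plan of attack with an explicitly unfilled central gap, consistent with the problem's status as open.
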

Our Theorem~\ref{degree_ekr} verifies this conjecture for $s=2$. A result of K\"uhn, Osthus and Treglown \cite{KOT} confirmed the case when $k=3$ and $n$ is sufficiently large.
A more general conjecture on the minimum $d$-degree was given in \cite{zhao_survey}.


%


\end{list}

\end{document}